\newcommand{\Z}{\mbox{$\mathbb Z$}}	
\newcommand{\Q}{\mathbb Q}	
\theoremstyle{definition}
\newtheorem{theorem}{Theorem}[section]
\newtheorem{lemma}[theorem]{Lemma}
\newtheorem{corollary}[theorem]{Corollary}
\theoremstyle{definition}
\newtheorem{example}[theorem]{Example}
\theoremstyle{remark}
\newtheorem{remark}[theorem]{Remark}
\theoremstyle{definition}
\newcounter{cnt}
\def\mydggeometry{\makeatletter\dg@YGRID=1\dg@XGRID=20\unitlength=0.003pt\makeatother}
\makeatother \theoremstyle{remark}
\numberwithin{equation}{section}
\let\bwdg\bigwedge
\def\bigwedge{{\textstyle\bwdg}}
\newcommand{\nc}{\newcommand}
\newcommand{\rnc}{\renewcommand}
\nc{\cal}{\mathcal} \nc{\goth}{\mathfrak} \rnc{\bold}{\mathbf}
\nc\bomega{{\mbox{\boldmath $\omega$}}} \nc\bpsi{{\mbox{\boldmath $\Psi$}}}
 \nc\balpha{{\mbox{\boldmath $\alpha$}}}
 \nc\bpi{{\mbox{\boldmath $\pi$}}}
 \nc\bvpi{{\mbox{\boldmath $\varpi$}}}
\nc\chara{\operatorname{ch}}
  \nc\bxi{{\mbox{\boldmath $\xi$}}}
\nc\bmu{{\mbox{\boldmath $\mu$}}} \nc\bcN{{\mbox{\boldmath $\cal{N}$}}} \nc\bcm{{\mbox{\boldmath $\cal{M}$}}} \nc\blambda{{\mbox{\boldmath
$\lambda$}}}\nc\bnu{{\mbox{\boldmath $\nu$}}}
\def\section{\def\@secnumfont{\mdseries}\@startsection{section}{1}%
  \z@{.7\linespacing\@plus\linespacing}{.5\linespacing}%
  {\normalfont\scshape\centering}}
\def\subsection{\def\@secnumfont{\bfseries}\@startsection{subsection}{2}%
  {\parindent}{.5\linespacing\@plus.7\linespacing}{-.5em}%
  {\normalfont\bfseries}}
 \nc{\Hom}{\operatorname{Hom}}
  \nc{\mode}{\operatorname{mod}}
\nc{\End}{\operatorname{End}} \nc{\wh}[1]{\widehat{#1}} \nc{\Ext}{\operatorname{Ext}} \nc{\ch}{\text{ch}} \nc{\ev}{\operatorname{ev}}
\nc{\Ob}{\operatorname{Ob}} \nc{\soc}{\operatorname{soc}} \nc{\rad}{\operatorname{rad}} \nc{\head}{\operatorname{head}}
 \nc{\Cal}{\cal} \nc{\Xp}[1]{X^+(#1)} \nc{\Xm}[1]{X^-(#1)}
\nc{\N}{{\bold N}}  \nc\boa{\bold a} \nc\bob{\bold b} \nc\boc{\bold c} \nc\bod{\bold d} \nc\boe{\bold e} \nc\bof{\bold f} \nc\bog{\bold g}
\nc\boh{\bold h} \nc\boi{\bold i} \nc\boj{\bold j} \nc\bok{\bold k} \nc\bol{\bold l} \nc\bom{\bold m} \nc\bon{\mathbb n} \nc\boo{\bold o}
\nc\bop{\bold p} \nc\boq{\bold q} \nc\bor{\bold r} \nc\bos{\bold s} \nc\boT{\bold t} \nc\boF{\bold F} \nc\bou{\bold u} \nc\bov{\bold v}
\nc\bow{\bold w} \nc\boz{\bold z}\nc\ba{\bold A} \nc\bb{\bold B} \nc\bc{\mathbb C} \nc\bd{\bold D} \nc\be{\bold E} \nc\bg{\bold
G} \nc\bh{\bold H} \nc\bi{\bold I} \nc\bj{\bold J} \nc\bk{\bold K} \nc\bl{\bold L} \nc\bm{\bold M} \nc\bn{\mathbb N} \nc\bo{\bold O} \nc\bp{\bold
P} \nc\bq{\bold Q} \nc\br{\bold R} \nc\bs{\bold S} \nc\bt{\bold T} \nc\bu{\bold U} \nc\bv{\bold V} \nc\bw{\bold W} \nc\bz{\mathbb Z} \nc\bx{\bold
x} \nc\KR{\bold{KR}} \nc\rk{\bold{rk}} \nc\het{\text{ht }}
\nc\toa{\tilde a} \nc\tob{\tilde b} \nc\toc{\tilde c} \nc\tod{\tilde d} \nc\toe{\tilde e} \nc\tof{\tilde f} \nc\tog{\tilde g} \nc\toh{\tilde h}
\nc\toi{\tilde i} \nc\toj{\tilde j} \nc\tok{\tilde k} \nc\tol{\tilde l} \nc\tom{\tilde m} \nc\ton{\tilde n} \nc\too{\tilde o} \nc\toq{\tilde q}
\nc\tor{\tilde r} \nc\tos{\tilde s} \nc\toT{\tilde t} \nc\tou{\tilde u} \nc\tov{\tilde v} \nc\tow{\tilde w} \nc\toz{\tilde z} \nc\woi{w_{\omega_i}}
\begin{document}


\title[Monogenic Binomial Composition]{A Study of monogenity of Binomial Composition}

\author[Anuj Jakhar]{Anuj Jakhar}
	\author[Ravi Kalwaniya]{Ravi Kalwaniya}
	\author[Prabhakar yadav]{Prabhakar yadav}
	\address[Anuj Jakhar, Ravi Kalwaniya]{Department of Mathematics, Indian Institute of Technology (IIT) Madras}
	\address[Prabhakar Yadav]{Indian Statistical Institute, New Delhi}
	\email[Anuj Jakhar]{anujjakhar@iitm.ac.in\\}
	\email[Ravi Kalwaniya]{ma22d021@smail.iitm.ac.in}
	\email[Prabhakar yadav]{pkyadav914@gmail.com}



\subjclass [2010]{11R04; 11R29, 11Y40.}
\keywords{Rings of algebraic integers; Index of an algebraic integer; Power basis.}

\maketitle

\vspace{-0.2in}

\begin{abstract}
Let $\theta$ be a root of a monic polynomial $h(x) \in \Z[x]$ of degree $n \geq 2$. We say $h(x)$ is monogenic if it is irreducible over $\Q$ and $\{ 1, \theta, \theta^2, \ldots, \theta^{n-1} \}$ is a basis for the ring $\Z_K$ of integers of $K = \Q(\theta)$. In this article, we study about the monogenity of number fields generated by a root of composition of two binomials. We characterise all the primes dividing the index of the subgroup $\Z[\theta]$ in $\Z_K$ where $K = \Q(\theta)$ with $\theta$ having minimal polynomial  $F(x) = (x^m-b)^n - a \in \Z[x]$, $m\geq 1$ and $n \geq 2$. As an application, we provide a class of pairs of binomials $f(x)=x^n-a$ and $g(x)=x^m-b$ having the property that both $f(x)$ and $f(g(x))$ are monogenic. 

\end{abstract}
\maketitle

\section{Introduction and statements of results}\label{intro}

Let $K = \Q(\theta)$ be an algebraic number field with $\theta$ in the ring $\Z_K$ of algebraic integers of $K$ and $h(x)$ having degree $n$ be the minimal polynomial of $\theta$ over the field $\Q$ of rational numbers.
Let $d_K$ denote the discriminant of $K$ and $D_h$ denote the discriminant of the polynomial $h(x)$. It is well-known that $d_K$ and $D_h$ are related by the following formula $$D_h = [\Z_K : \Z[\theta]]^2d_K.$$ We say that $h(x)$ is monogenic if $\Z_K = \Z[\theta]$, or equivalently, if $D_h = d_K$. In this case, $\{1, \theta, \cdots, \theta^{n-1}\}$ will be an integral basis of $K$ and $K$ will be a monogenic number field. A number field $K$ is called monogenic if there exists some $\alpha \in \Z_K$ such that $\Z_K = \Z[\alpha]$. Note that the monogenity of a polynomial $h(x)$ implies that $K= \Q(\theta)$ is monogenic where $\theta$ is a root of $h(x)$, but the converse is not true in general. For example, let $\alpha$ and $\beta$ be the roots of $h_1(x)=x^2-5$ and $h_2(x)=x^2-x-1$ respectively, then $\Q(\alpha) = \Q(\beta)$. Although $h_2(x)$ is monogenic  but $h_1(x)$ is not monogenic.

The determination of monogenity of an algebraic number field is one of the classical and important problems in algebraic number theory. An arithmetic characterisation of monogenic number fields is a problem due to Hasse (cf. \cite{Hasse}). Ga{\'a}l's book provides some classification of monogenity in lower degree number fields (cf. \cite{Gaal}). In 2016, Jhorar and Khanduja gave necessary and sufficient conditions for $\Z_K = \Z[\theta]$ when $\theta$ is a root of an irreducible binomial $x^n-b \in \Z[x]$ (cf. \cite[Theorem 1.3]{Jh-Kh}. In fact, they proved the following result.
\begin{theorem}\label{binom}
	Let $K = \Q(\theta)$ be a Kummer extension of $\Q$ with $\theta$ satisfying an irreducible polynomial $x^n-b$ over $\Z$. Then the following statements are equivalent:
	\begin{itemize}
		\item[(i)] $\Z_K = \Z[\theta]$.
		\item[(ii)] $b$ is square-free integer and whenever a prime $p$ divides $n$, then $p^2$ does not divide $b^{p} - b$.
	\end{itemize}
\end{theorem}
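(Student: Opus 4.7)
The plan is to apply Dedekind's criterion to $f(x)=x^n-b$ prime by prime. Recall: writing $\overline{f}(x)=\prod_i \overline{g_i}(x)^{e_i}$ in $\mathbb{F}_p[x]$ with $\overline{g_i}$ distinct monic irreducibles, $g_i\in\Z[x]$ any monic lifts, and $M(x):=(f-\prod_i g_i^{e_i})/p\in\Z[x]$, one has $p\nmid[\Z_K:\Z[\theta]]$ if and only if $\gcd\bigl(\overline{M},\prod_{e_i\ge 2}\overline{g_i}\bigr)=1$ in $\mathbb{F}_p[x]$. Since the discriminant of $x^n-b$ equals $\pm n^n b^{n-1}$, only primes $p\mid nb$ can contribute, so the analysis splits into two cases.

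\emph{Case (a): $p\mid b$.} Here $\overline{f}(x)=x^n$ (regardless of whether $p\mid n$), the only irreducible factor is $\overline{g}(x)=x$ with multiplicity $n\ge 2$, and with the lift $g(x)=x$ one gets $M(x)=-b/p$. The criterion reduces to $-b/p\not\equiv 0\pmod p$, i.e.\ $p^2\nmid b$; that is, $b$ is squarefree at $p$.

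\emph{Case (b): $p\mid n$, $p\nmid b$.} Write $n=p^kn'$ with $\gcd(n',p)=1$ and $k\ge 1$. Since $\overline{b}^{p^k}=\overline{b}$ in $\mathbb{F}_p$, we have $\overline{f}(x)=(x^{n'}-\overline{b})^{p^k}$, and $x^{n'}-\overline{b}$ factors into distinct irreducibles $\prod_i\overline{g_i}(x)$ because it is separable. Each exponent is $e_i=p^k\ge 2$. Choosing lifts with $\prod_i g_i(x)=x^{n'}-b$ and using the congruence $(A+pB)^{p^k}\equiv A^{p^k}\pmod{p^2}$, one may replace $\prod_i g_i^{p^k}$ by $(x^{n'}-b)^{p^k}$ without affecting $\overline{M}$. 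Expanding via the binomial theorem and keeping only the summands with $v_p\bigl(\binom{p^k}{j}\bigr)=1$ (namely $j=p^{k-1}i$ for $1\le i\le p-1$), one obtains, for $p$ odd,
\[
\overline{M}(x)\equiv \tfrac{b^{p^k}-b}{p}-\tfrac{1}{p}\sum_{i=1}^{p-1}\binom{p^k}{p^{k-1}i}(-b)^{p^{k-1}(p-i)}x^{n'p^{k-1}i}\pmod p.
\]
The criterion requires $\overline{M}(\alpha)\neq 0$ for every $\alpha\in\overline{\mathbb F}_p$ with $\alpha^{n'}=\overline{b}$. Substituting $\alpha^{n'}=\overline{b}$ and applying the Fermat-quotient stability $b^{p^k}\equiv b^p\pmod{p^2}$ (valid for $k\ge 1$), Fermat's little theorem, and the classical congruence $\binom{p^k}{p^{k-1}i}/p\equiv(-1)^{i-1}/i\pmod p$, the $i$-th term of the sum contributes exactly $-b/i\pmod p$. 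Because $\sum_{i=1}^{p-1}i^{-1}\equiv 0\pmod p$ for $p\ge 3$, the entire sum vanishes, leaving $\overline{M}(\alpha)\equiv (b^p-b)/p\pmod p$; thus the criterion holds iff $p^2\nmid b^p-b$. The prime $p=2$ requires a parallel but sign-adjusted calculation (since $(-1)^{2^k}=1$) yielding the same conclusion.

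Finally, when $p\mid\gcd(n,b)$ Case~(a) governs and forces $v_p(b)=1$, whence $v_p(b^p-b)=v_p\bigl(b(b^{p-1}-1)\bigr)=1$, so the condition $p^2\nmid b^p-b$ is automatic; this confirms the joint conditions of (ii) are both necessary and sufficient. The main obstacle will be the telescoping in Case~(b): establishing the binomial identity $\binom{p^k}{p^{k-1}i}/p\equiv(-1)^{i-1}/i\pmod p$ for every $k\ge 1$ (not only $k=1$) and then verifying that the non-constant contribution to $\overline{M}(\alpha)$ cancels entirely via $\sum_{i=1}^{p-1}i^{-1}\equiv 0\pmod p$, leaving only the Fermat-quotient term $(b^p-b)/p$.
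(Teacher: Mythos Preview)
The paper does not itself prove Theorem~\ref{binom}; it quotes the result from Jhorar--Khanduja \cite{Jh-Kh}. That said, your framework (Dedekind's criterion, prime-by-prime) is exactly the one the paper uses to prove its main Theorem~\ref{1.1}, so a meaningful comparison is with the paper's handling of the analogous case there (Case~(iii): $p\mid n$, $p\nmid ab$).

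Your argument is correct, but in Case~(b) you take a considerably harder road than the paper does. You compute $\overline{M}(x)$ explicitly, evaluate at a root $\alpha$ of $x^{n'}-b$, and then rely on three nontrivial facts left unproved: the congruence $\tfrac{1}{p}\binom{p^k}{p^{k-1}i}\equiv(-1)^{i-1}/i\pmod p$ for all $k\ge 1$, the vanishing $\sum_{i=1}^{p-1}i^{-1}\equiv 0\pmod p$, and a separate sign-adjusted computation for $p=2$. All three are true (the first follows from $\binom{p^k}{p^{k-1}i}=\frac{p}{i}\binom{p^k-1}{p^{k-1}i-1}$ together with Lucas' theorem), so your proof goes through, but they are genuine obstacles that you have only announced rather than carried out.

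The paper's approach bypasses all of this by using criterion~(iii) of Lemma~\ref{dedekind} instead of~(ii). Writing
\[
x^n-b=\bigl((x^{n'}-b)+b\bigr)^{p^k}-b=(x^{n'}-b)^{p^k}+\sum_{j=1}^{p^k-1}\binom{p^k}{j}(x^{n'}-b)^{p^k-j}b^{\,j}+b^{p^k}-b,
\]
each middle term lies in $p\cdot(x^{n'}-b)\Z[x]\subset\langle p,g_i\rangle^2$ (since $p\mid\binom{p^k}{j}$ and $x^{n'}-b\in\langle p,g_i\rangle$), and the leading term lies in $\langle p,g_i\rangle^{p^k}\subset\langle p,g_i\rangle^2$. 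Hence $x^n-b\in\langle p,g_i\rangle^2$ if and only if $p^2\mid b^{p^k}-b$, and the paper's Remark reduces this to $p^2\mid b^p-b$. This one-line ideal computation replaces your binomial identity, the harmonic-sum cancellation, and the $p=2$ case simultaneously.
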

  \noindent Using Dedekind's Index Criterion, in 2016-17, Jakhar, Khanduja and Sangwan \cite{IJNT} extended the above result and provided necessary and sufficient conditions for $\Z_K = \Z[\theta]$ when $\theta$ is a root of an irreducible trinomial $x^n+ax^m+b \in \Z[x]$ having degree $n$. As an application, they provided infinitely many monogenic trinomials. 
Further in 2020, Jones and Harrington \cite{JonesHarr} investigated few pairs of binomials $f(x)= x^n-a$ and $g(x)= x^m-b$ having the property that both $f(x)$ and $f(g(x))$ are monogenic. In fact, they provided necessary and sufficient conditions for the monogenicity of the polynomial $(x^m-b)^n-a$ with $n=2,3.$  In 2022, Ga{\'a}l \cite{Gaal2} described monogenity properties of a class of binomial compositions of
  degree six. 

Let $K = \Q(\theta)$ be an algebraic number field where $\theta$ is a root of an irreducible polynomial $f(g(x))=(x^m-b)^n-a$ over $\Q$.
In this article, we characterise all the primes dividing the index $[\Z_K : \Z[\theta]]$. As an application, we provide necessary and sufficient conditions for $\Z_K = \Z[\theta].$ Moreover, we also give class of binomials $f(x)$ and $g(x)$ such that both $f(x)$, $f(g(x))$ are monogenic.

Throughout the paper, $f(x)=x^n-a$ and $g(x)= x^m-b$ will be polynomials in $\Z[x]$ having degree $n \geq 2$ and $ m \geq 1$, respectively.  $D_F$ will stand for the discriminant of an irreducible polynomial $F(x)= f(g(x)) = (x^m-b)^n-a$. Although the formula for $|D_F|$ is given in \cite[Lemma 3.1]{JonesHarr}, for the sake of completion we provide a quick derivation of $D_F$ in Lemma \ref{disc lemma} using a different approach. We prove that $D_F$ is given by 
\begin{equation} \label {eq:1.1} 
	D_F = (-1)^{\frac{n(n-1)}{2}+m} (mn)^{mn}a^{m(n-1)}((-b)^n-a)^{m-1}.
\end{equation}
For an integer $z$, $\rad(z)$ will denote the product of distinct primes dividing $z$.

We wish to point out here that in view of Theorem \ref{binom} we can consider $n\geq 2$ while studying the monogenity of the polynomial $f(g(x))$, the composition of two polynomials $f(x) = x^n-a$ and $g(x) = x^m-b$.

 Precisely stated, we prove the following result.

\begin{theorem} \label{1.1}
{\it Let $K=\mathbb \Q(\theta)$ be an algebraic number field with $\theta$ in the ring $\Z_K$ of algebraic integers of $K$ having  minimal polynomial  $F(x)=(x^m-b)^n-a$ over $\Q$ with $ m \geq 1$ and $n \geq 2$. A prime factor $p$ of the discriminant  $D_F$ of $F(x)$ does not divide $[\Z_{K} : \Z[\theta]]$ if and only if $p$  satisfies one of the following conditions:
\begin{enumerate}[(i)]
	\item when $p \mid a$, then $p^2 \nmid a$.
	\item when $p\nmid a$ and $p\mid b$ with $m=p^js,~n=p^ks',~j+k\geq 1$ and $p\nmid ss'$, then the polynomials $\frac{1}{p}[a^{p^{j+k}}-a-nb g(x)^{n-1}]$ and $x^{ss'}-a $ are coprime $modulo ~p$.
	\item when $p \nmid ab$ and $p|n$ with $k \geq 1$ as the highest power of $p$ dividing $n$, then $p^2\nmid(a^{p^k}-a)$.
	\item when $p\nmid abn$ and $p\mid m$ with $m=p^js$, then $(x^s-b)^n-a$ must be coprime to $\frac{1}{p}\left[ a^{p^j} -a + n \sum_{i=1}^{p-1} {p^j \choose ip^{j-1}} (x^s-b)^{np^j-ip^{j-1}} b^i + n(b^{p^j}-b)  \right]$.
	\item when $p\nmid abmn$, then $p^2\nmid ((-b)^n-a)$. 
\end{enumerate} }
\end{theorem}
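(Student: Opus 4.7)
The plan is to invoke Dedekind's Index Criterion for each prime $p\mid D_F$: writing $\bar F = \prod_i \bar\phi_i^{e_i}$ in $\F_p[x]$ as a product of distinct monic irreducibles and lifting to $\phi_i\in\Z[x]$, $p\nmid[\Z_K:\Z[\theta]]$ if and only if $\gcd(\bar M,\prod_{e_i\ge 2}\bar\phi_i)=1$ in $\F_p[x]$, where $M=\tfrac{1}{p}\bigl(F-\prod_i\phi_i^{e_i}\bigr)\in\Z[x]$. From \eqref{eq:1.1}, a prime divides $D_F$ iff it divides $mn\cdot a\cdot((-b)^n-a)$, and the five conditions (i)--(v) partition such primes. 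In each case I identify the factorization of $\bar F$, pick a convenient $\phi\in\Z[x]$ with $\bar F=\bar\phi^e$ (or at least whose radical controls that of $\bar F$), and compute $\bar M$ modulo $\bar\phi$ to transcribe the criterion into the displayed form.

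The easy cases are (i) and (v). For (i), with $p\mid a$, the identity $F-(x^m-b)^n=-a$ immediately makes $\bar M=\overline{-a/p}$ a constant, and coprimality reduces at once to $p^2\nmid a$. For (v), with $p\nmid abmn$ and $p\mid(-b)^n-a$, expanding $(x^m-b)^n$ in powers of $x^m$ shows that $x^m\mid\bar F$ with multiplicity exactly $m$, while every other root of $y^n\equiv a\pmod p$ is distinct from $-\bar b$; thus $\bar F=x^m\cdot R(x)$ with $R$ squarefree and $R(0)\ne 0$, so Dedekind collapses to $M(0)=F(0)/p=((-b)^n-a)/p\not\equiv 0\pmod p$. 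Case (iii) is intermediate: writing $n=p^k s''$, Frobenius in $\F_p$ gives $\bar F=((x^m-b)^{s''}-a)^{p^k}$; setting $\phi=(x^m-b)^{s''}-a$, the relation $(x^m-b)^{s''}\equiv a\pmod\phi$ together with the elementary congruence $(a+p\tau)^{p^k}\equiv a^{p^k}\pmod{p^{k+1}}$ absorbs all interior binomial terms of $\phi^{p^k}$, reducing $M$ modulo $\bar\phi$ to $(a^{p^k}-a)/p$ and yielding the criterion $p^2\nmid a^{p^k}-a$. The sub-case $p\mid m$ is absorbed since $v_p(a^{p^\kappa}-a)=v_p(a^p-a)$ whenever positive, so the same condition still applies even if the true Frobenius exponent is $p^{j+k}$ rather than $p^k$.

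The technical core is (ii) and (iv), where $\bar F=\bar\phi^{p^e}$ with $\bar\phi$ separable: $\phi=x^{ss'}-a$ and $e=j+k$ in (ii) (since $p\mid b$ gives $\bar F=(x^{ss'}-a)^{p^{j+k}}$); $\phi=(x^s-b)^n-a$ and $e=j$ in (iv) (since $x^m-b\equiv(x^s-b)^{p^j}\pmod p$). In each I expand $F$ and $\phi^{p^e}$ modulo $p^2$: for $F$, the binomial expansion and $p\mid b$ (or the expansion $x^m-b=(x^s-b)^{p^j}+pU(x)$ in (iv)) collapses to $(x^m-b)^n\equiv x^{mn}-nb\,x^{m(n-1)}\pmod{p^2}$ in (ii), with an analogous explicit $U$ in (iv); for $\phi^{p^e}$, Kummer's theorem $v_p\binom{p^e}{i}=e-v_p(i)$ for $0<i<p^e$ means that modulo $p^2$ only the terms $i=\ell p^{e-1}$ $(1\le\ell\le p-1)$ survive, with coefficient $\binom{p^e}{\ell p^{e-1}}/p\equiv\binom{p}{\ell}/p\pmod p$. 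Combining, dividing by $p$, and then reducing modulo $\bar\phi$ (using $x^{ss'}\equiv a$ or $(x^s-b)^n\equiv a$), the leftover binomial sums from $\phi^{p^e}$ vanish via the Wolstenholme-style identity $\sum_{\ell=1}^{p-1}\binom{p}{\ell}/p\cdot(-1)^{p-\ell}\equiv 0\pmod p$ (equivalent for odd $p$ to $\sum_\ell 1/\ell\equiv 0$), leaving precisely the explicit $M$-polynomials stated in (ii) and (iv). The main obstacle is exactly this bookkeeping; a sub-obstacle is $p=2$, where the Wolstenholme identity fails and one must track the signs $(-a)^{p^e}=a^{p^e}$ (rather than $-a^{p^e}$) directly to verify that the stated criterion remains correct.
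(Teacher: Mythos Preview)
Your overall strategy---Dedekind's criterion applied case by case, with the five cases partitioning the primes dividing $D_F$---is exactly the paper's, and your treatments of (i), (iii), (v) are correct and in fact slightly slicker than the paper's (e.g.\ in (i) you avoid the paper's split into $p\mid b$ versus $p\nmid b$ by noting at once that $F-(x^m-b)^n=-a$).

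Where you diverge is in the mechanics of (ii) and (iv). You expand $F$ and $\phi^{p^e}$ separately modulo $p^2$, subtract, and then must argue that the surviving cross-terms from $\phi^{p^e}$ vanish modulo $(\bar\phi,p)$ via a ``Wolstenholme-style'' identity, with $p=2$ flagged as a separate headache. This works, but it is a self-inflicted complication. The paper instead uses formulation (iii) of Dedekind (membership in $\langle p,g_i\rangle^2$) together with the single observation that, writing $h=\phi$ and noting $x^{mn}=(h+a)^{p^{j+k}}$ in case~(ii) (or $(x^s-b)^{np^j}=(h+a)^{p^j}$ in case~(iv)), the binomial expansion gives
\[
(h+a)^{p^e}-h^{p^e}-a^{p^e}=\sum_{i=1}^{p^e-1}\binom{p^e}{i}h^{\,p^e-i}a^{i}\in p\,h\,\Z[x]
\]
directly, since every term carries both a factor of $h$ and a factor of $p$. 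This single line replaces your entire Kummer/Wolstenholme bookkeeping, is uniform in $p$ (no $p=2$ case), and immediately leaves the explicit remainder $a^{p^e}-a-nb\,g(x)^{n-1}$ (resp.\ the displayed polynomial in (iv)) as the only obstruction. In short: your route is valid but roundabout; the identity you call ``Wolstenholme-style'' is really just the above containment viewed through the wrong end of the telescope, and recognising it as such eliminates both the harmonic-sum argument and the parity worry.
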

 
 In the special case, the above theorem quickly yields the following simple result regarding monogenic polynomials.
 \begin{corollary}\label{newcor}
 	Let $K = \Q(\theta)$ and $F(x)=(x^m-b)^n-a$ be as in Theorem \ref{1.1}. Assume that $\rad(mn)$ divides $\rad(a)$. Then $\Z_K = \Z[\theta]$ if and only if both $a$ and $(-b)^n-a$ are square-free.
 \end{corollary}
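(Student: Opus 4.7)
My plan is to apply Theorem~\ref{1.1} directly. The key observation is that the hypothesis $\rad(mn)\mid\rad(a)$ forces every prime dividing $mn$ also to divide $a$; consequently cases (ii), (iii) and (iv) of Theorem~\ref{1.1} are vacuous, since each requires $p\nmid a$ while its remaining hypothesis ($p\mid b$ with $j+k\geq 1$ in (ii), $p\mid n$ in (iii), $p\mid m$ in (iv)) forces $p\mid mn$ and hence $p\mid a$. The criterion therefore reduces to: $\Z_K=\Z[\theta]$ if and only if $p^2\nmid a$ for every prime $p\mid a$, and $p^2\nmid ((-b)^n-a)$ for every prime $p\mid D_F$ with $p\nmid abmn$.

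The ``if'' direction is then immediate: if $a$ and $(-b)^n-a$ are both square-free, both of these conditions hold. For the converse, assume $\Z_K=\Z[\theta]$. Condition (i) of Theorem~\ref{1.1} directly yields that $a$ is square-free. To show that $(-b)^n-a$ is square-free, I argue by contradiction: suppose $p^2\mid ((-b)^n-a)$ for some prime $p$. If $p\nmid abmn$, condition (v) is violated. Otherwise $p\mid abmn$; the hypothesis $\rad(mn)\mid\rad(a)$ excludes the subcase $p\mid mn$ with $p\nmid a$, so either $p\mid a$, or else $p\nmid a$ and $p\mid b$. In the first subcase, $p\mid a$ together with $p\mid ((-b)^n-a)$ gives $p\mid (-b)^n$, hence $p\mid b$; thus in either subcase we have $p\mid b$. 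Since $n\geq 2$, this gives $p^2\mid b^n$, and therefore $(-b)^n-a\equiv -a\pmod{p^2}$. Combined with $p^2\mid ((-b)^n-a)$ this yields $p^2\mid a$, contradicting either squarefreeness of $a$ (when $p\mid a$) or the assumption $p\nmid a$ (when $p\nmid a$).

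The main obstacle is essentially bookkeeping: verifying that the five cases of Theorem~\ref{1.1} collapse cleanly under the hypothesis $\rad(mn)\mid\rad(a)$ and that every potential obstruction prime for $(-b)^n-a$ outside the scope of condition (v) is already forced away by the squarefreeness of $a$. The only nontrivial algebraic input is the elementary reduction $(-b)^n-a\equiv -a\pmod{p^2}$ whenever $p\mid b$ and $n\geq 2$.
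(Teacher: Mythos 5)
Your reduction of Theorem~\ref{1.1} to cases (i) and (v) under the hypothesis $\rad(mn)\mid\rad(a)$ is exactly the intended route (the paper offers no written proof, saying only that the corollary ``quickly yields'' from the theorem), and your treatment of the primes $p\mid abmn$ via the congruence $(-b)^n-a\equiv -a\pmod{p^2}$ is correct. There is, however, one genuine gap, located in the sentence ``If $p\nmid abmn$, condition (v) is violated.'' Theorem~\ref{1.1} only constrains primes that divide $D_F$, and by \eqref{eq:1.1} the factor $(-b)^n-a$ enters $D_F$ with exponent $m-1$; hence a prime $p$ with $p\nmid abmn$ and $p^2\mid((-b)^n-a)$ is governed by condition (v) only when $m\geq 2$. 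You respect this in your own reduction (where you correctly restrict to ``every prime $p\mid D_F$ with $p\nmid abmn$'') but silently drop the requirement $p\mid D_F$ in the contradiction step.

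For $m\geq 2$ the gap is harmless: $p\mid((-b)^n-a)$ together with $m-1\geq 1$ gives $p\mid D_F$, so a one-line remark closes it. For $m=1$ the step genuinely fails, and in fact the ``only if'' direction of the corollary as stated is false there: take $m=1$, $n=2$, $a=2$, $b=10$, so that $\rad(mn)=2=\rad(a)$ and $F(x)=(x-10)^2-2$ is irreducible with root $\theta=10+\sqrt{2}$; then $\Z[\theta]=\Z[\sqrt{2}]=\Z_K$, yet $(-b)^n-a=98=2\cdot 7^2$ is not square-free. So your argument (and the corollary) should either be restricted to $m\geq 2$, or the condition on $(-b)^n-a$ must be dropped when $m=1$. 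Apart from this boundary case your proof is complete and matches the derivation the paper has in mind.
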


 \begin{remark}
 	Note that if $p$ is prime number such that $p\nmid a$ with $a\in \Z$, then for any positive integer $s$ the exact power of $p$ dividing $a^{p^s-1}-1$ will be same as the exact power of $p$ dividing $a^{p-1}-1$; this can be easily verified keeping in mind that $p^s-1 = (p-1)m$ with $m\equiv ~1~(mod~p)$ and $a^{p-1} \equiv 1~(mod~p).$
 \end{remark}
 

Keeping in mind the above remark, the following corollary is an immediate consequence of Theorem \ref{1.1}. 
\begin{corollary} \label{cor:1.2}
Let $K = \Q(\theta)$ with $\theta$ having minimal polynomial $F(x)=(x^m-b)^n-a \in \Z[x]$ with $m \geq 1, n\geq 2$. Let $\rad(m)$ divide $\rad(an)$.  Then $\Z_{K} = \Z[\theta]$ if and only if each prime $p$ dividing $D_F$ satisfies one of the following:
\begin{enumerate}[(i)]
	\item when $p \mid a$, then $p^2 \nmid a$.
	\item when $p \nmid a$, then $p^2 \nmid (a^p-a)$.
	\item when $p\nmid abmn$, then $p^2\nmid ((-b)^n-a)$.
\end{enumerate}
\end{corollary}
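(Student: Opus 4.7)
The plan is to deduce this corollary from Theorem~\ref{1.1} by analysing how each of the five conditions (i)--(v) of that theorem collapses under the extra hypothesis $\rad(m)\mid\rad(an)$. I would go through the five cases of Theorem~\ref{1.1} in turn, matching each to one of the three cases in the corollary.

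First, I would observe that case (iv) of Theorem~\ref{1.1} is vacuous: it requires a prime $p\mid m$ with $p\nmid abn$, but $\rad(m)\mid\rad(an)$ forces every prime dividing $m$ to divide $an$, contradicting $p\nmid an$. Next, for case (iii) of Theorem~\ref{1.1}, I would apply the Remark stated just before the corollary: when $p\nmid a$, the congruence $p^{2}\mid a^{p^{k}}-a$ is equivalent to $p^{2}\mid a^{p}-a$, so (iii) of Theorem~\ref{1.1} becomes exactly condition (ii) of the corollary. Cases (i) and (v) of Theorem~\ref{1.1} translate verbatim to conditions (i) and (iii) of the corollary.

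The delicate step is reducing case (ii) of Theorem~\ref{1.1} to condition (ii) of the corollary. Here the hypothesis $p\nmid a$, $p\mid b$, $j+k\geq 1$ combines with $\rad(m)\mid\rad(an)$: if $j\geq 1$ (so $p\mid m$), then $p\mid an$, and since $p\nmid a$ we get $p\mid n$, i.e.\ $k\geq 1$. Hence in every sub-case of (ii) we have $k\geq 1$, which together with $p\mid b$ gives $p^{2}\mid nb$. Consequently the polynomial $\frac{1}{p}[a^{p^{j+k}}-a-nb\,g(x)^{n-1}]$ reduces modulo $p$ to the constant $\frac{1}{p}(a^{p^{j+k}}-a)$, where $p$ divides $a^{p^{j+k}}-a$ by Fermat's little theorem. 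Coprimality of this constant with the nonconstant polynomial $x^{ss'}-a$ modulo $p$ is then equivalent to the constant being nonzero mod $p$, i.e.\ $p^{2}\nmid a^{p^{j+k}}-a$. One more application of the Remark converts this into $p^{2}\nmid a^{p}-a$, which is again condition (ii) of the corollary.

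The main obstacle I anticipate is the bookkeeping in this last step: verifying that the polynomial coprimality condition in Theorem~\ref{1.1}(ii) really degenerates to a numerical non-divisibility, and correctly invoking the Remark both there and in the simplification of case (iii). Once that is in place, the five cases of Theorem~\ref{1.1} assemble into the three cases of the corollary, completing the proof.
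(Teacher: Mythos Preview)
Your proposal is correct and follows exactly the route the paper intends: the paper states the corollary as an immediate consequence of Theorem~\ref{1.1} together with the preceding Remark, and your case-by-case reduction is precisely how that immediacy unfolds. In particular, your handling of case~(ii)---observing that $\rad(m)\mid\rad(an)$ forces $k\geq 1$, so $p^2\mid nb$ and the coprimality condition collapses to $p^2\nmid a^{p^{j+k}}-a$---is exactly the observation the paper records at the end of its proof of Theorem~\ref{1.1}(ii).
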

Using Theorem \ref{binom} and the above corollary, we obtain the following result which extends \cite[Theorem 1.3]{JonesHarr} as well as it provides necessary and sufficient conditions for the monogenity of both $f(x)$ and $f(g(x))$. 
\begin{corollary} \label{cor:1.3}
	Let $f(x) = x^n-a$ and $F(x)=(x^m-b)^n-a \in \Z[x]$ with $m \geq 1,~n \geq 2$ and $\rad(m)$ divide $\rad(an)$. Suppose $f(x)$ and $F(x)$ are irreducible. Then $f(x)$ and $F(x)$ are monogenic if and only if the following hold:
	\begin{enumerate}[(i)]
		\item $a$ is squarefree.
		\item $a^p \not\equiv a $ (mod $p^2$) for all primes $p$ dividing $n$.
		\item For all primes $p$ dividing $((-b)^n-a)$ with $p\nmid abn$, we have $p^2\nmid ((-b)^n-a)$.
	\end{enumerate}
\end{corollary}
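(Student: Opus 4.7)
The plan is to combine Theorem \ref{binom} for the monogenicity of $f(x) = x^n - a$ with Corollary \ref{cor:1.2} for the monogenicity of the composed polynomial $F(x)$. By Theorem \ref{binom}, $f(x)$ is monogenic if and only if $a$ is squarefree and $p^2 \nmid a^p - a$ for every prime $p \mid n$; these are precisely conditions (i) and (ii) of the statement, the latter rewritten as $a^p \not\equiv a \pmod{p^2}$. Hence it remains to show that, granted (i) and (ii), the monogenicity of $F(x)$ is equivalent to (iii).

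For the forward direction, suppose $F(x)$ is monogenic and let $p$ be a prime divisor of $(-b)^n - a$ with $p \nmid abn$. The hypothesis $\rad(m) \mid \rad(an)$ together with $p \nmid an$ forces $p \nmid m$, so $p \nmid abmn$. From the discriminant formula \eqref{eq:1.1}, $p$ divides $D_F$, since $((-b)^n - a)^{m-1}$ appears as a factor. Applying Corollary \ref{cor:1.2} (iii) then yields $p^2 \nmid (-b)^n - a$, which is condition (iii).

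For the reverse direction, assume (i), (ii), (iii) hold. Theorem \ref{binom} immediately yields that $f(x)$ is monogenic, and it remains to show $F(x)$ is monogenic via Corollary \ref{cor:1.2}. I would verify the applicable hypotheses prime by prime for each $p \mid D_F$: if $p \mid a$, then (i) supplies $p^2 \nmid a$; if $p \nmid a$ and $p \mid n$, then (ii) supplies $p^2 \nmid a^p - a$; the case $p \nmid an$ with $p \mid m$ is excluded by $\rad(m) \mid \rad(an)$; and in the remaining case $p \nmid amn$ with $p \mid D_F$, the discriminant formula forces $p \mid (-b)^n - a$, so $p \nmid b$ (otherwise $p \mid a$, contradiction) and consequently $p \nmid abmn$, whence (iii) gives $p^2 \nmid (-b)^n - a$. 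This exhausts every prime $p \mid D_F$, so Corollary \ref{cor:1.2} applies and $F(x)$ is monogenic.

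The proof is largely routine once Theorem \ref{binom} and Corollary \ref{cor:1.2} are in hand. The only step requiring attention is the systematic prime-by-prime case analysis in both directions, which makes essential use of the hypothesis $\rad(m) \mid \rad(an)$ to eliminate the spurious case $p \mid m$ with $p \nmid an$; no new computation beyond the discriminant formula \eqref{eq:1.1} is required.
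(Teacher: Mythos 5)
Your route is exactly the paper's: the paper offers no separate argument for this corollary beyond the remark that it follows from Theorem \ref{binom} (handling $f$) and Corollary \ref{cor:1.2} (handling $F$), and your prime-by-prime verification in the reverse direction --- including the use of $\rad(m)\mid\rad(an)$ to kill the case $p\mid m$, $p\nmid an$, and the observation that $p\mid(-b)^n-a$ together with $p\mid b$ would force $p\mid a$ --- is correct and is what the authors intend.

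There is, however, one step that genuinely fails: in the forward direction you claim that a prime $p\mid(-b)^n-a$ with $p\nmid abmn$ must divide $D_F$ ``since $((-b)^n-a)^{m-1}$ appears as a factor.'' When $m=1$ that factor is $1$, such a $p$ need not divide $D_F$, and Corollary \ref{cor:1.2} then says nothing about it, so condition (iii) cannot be extracted. This is not merely a gap in your write-up: the statement itself is false for $m=1$. Take $m=1$, $n=2$, $a=3$, $b=27$; then $\rad(1)\mid\rad(6)$, $f(x)=x^2-3$ is monogenic, and $F(x)=(x-27)^2-3$ is monogenic because it is a translate of $f$, yet $(-b)^n-a=726=2\cdot3\cdot11^2$ is divisible by $11^2$ with $11\nmid abn$, so (iii) fails. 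Your argument (and the corollary) is sound under the implicit assumption $m\ge 2$, which is what the proof of Theorem \ref{1.1}, Case (v), silently uses when it notes that $p\mid D_F$ and $p\nmid abmn$ force $m\ge 2$; you should either add that hypothesis or restrict (iii) to primes dividing $D_F$.
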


The following example is an application of Corollary $\ref{newcor}$. In this example, $K=\Q(\theta)$ with $\theta$ a root of $F(x)$.
\begin{example}
	Let $p$ be a prime number. Take $m=n=a=p$ and $b=2p$ in Corollary $\ref{newcor}$. So we have $F(x)=(x^p-2p)^p-p$. Note that  $|D_F|=p^{3p^2-p}((-2p)^p-p)^{p-1}$.  Therefore in view of Corollary $\ref{newcor}$, $\Z_K = \Z[\theta]$ if and only if $(-2p)^p-p$ is square-free.  It can be easily checked that $(-2p)^p-p$ is square-free for $p<43$ except for $p=11,29$. Hence $\Z_K=\Z[\theta]$ for $p<43$ except $p=11,29$. Here, all computations for checking the factorisation of discriminant were done using sage.

\end{example} 

\section{Preliminaries}
Throughout the paper, $p$ will denote a prime number and for a polynomial $h(x)$ belonging to $ \Z[x]$, we shall denote by $\bar{h}(x)$ the polynomial over $\Z/p\Z$ obtained on replacing each coefficient of $h(x)$ modulo $p$.  


We now state the following two results (see \cite[Lemma 2.6]{SKM} for Lemma \ref{2.1lemma} and   \cite[Theorem 1.19]{SKM} for Lemma $\ref{extn}$) regarding Discriminant and Norm which will be used for obtaining the formula for the discriminant of composition of two binomials.

\begin{lemma}\label{2.1lemma} Let $f(x)\in\Z[x]$ be monic and irreducible with $\deg(f)=n.$ Let $\theta$ be a root of $f(x)$ and $K=\Q(\theta).$ Then
	$$ D_f=(-1)^{n(n-1)/2}~\mathcal{N}_{K/\Q} (f'(\theta)).$$
\end{lemma}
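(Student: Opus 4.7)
The plan is a direct computation from the definitions, using the factorization of $f(x)$ over an algebraic closure. Write $f(x) = \prod_{i=1}^n (x - \theta_i)$ in $\overline{\Q}[x]$, with $\theta_1 = \theta$ and $\theta_2, \ldots, \theta_n$ the remaining conjugates, and recall the definition $D_f = \prod_{i<j}(\theta_i - \theta_j)^2$. The strategy is to express $\prod_i f'(\theta_i)$ in two different ways: once as a product of differences of roots (which will recover $D_f$ up to sign), and once as a Galois-invariant quantity (the norm of $f'(\theta)$).

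For the first step, I would apply the product rule to $f(x) = \prod_j (x-\theta_j)$ to obtain $f'(x) = \sum_{k=1}^n \prod_{j \neq k}(x-\theta_j)$, and then observe that evaluation at $x = \theta_i$ kills every term except $k=i$, leaving $f'(\theta_i) = \prod_{j \neq i}(\theta_i - \theta_j)$. Taking the product over $i$ gives $\prod_{i=1}^n f'(\theta_i) = \prod_{i \neq j}(\theta_i - \theta_j)$, and reorganizing the ordered pairs $(i,j)$ into unordered pairs $\{i,j\}$ with $i<j$ (each contributing the factor $(\theta_i-\theta_j)(\theta_j-\theta_i) = -(\theta_i-\theta_j)^2$) yields
\[
\prod_{i=1}^n f'(\theta_i) = (-1)^{n(n-1)/2}\, D_f.
\]

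For the second step, since $f(x)$ is the minimal polynomial of $\theta$ over $\Q$, the $n$ distinct $\Q$-embeddings $\sigma_i \colon K \hookrightarrow \overline{\Q}$ are precisely those sending $\theta \mapsto \theta_i$. Since $f' \in \Z[x]$, each embedding commutes with evaluation of $f'$, so $\sigma_i(f'(\theta)) = f'(\theta_i)$, and therefore $\mathcal{N}_{K/\Q}(f'(\theta)) = \prod_{i=1}^n f'(\theta_i)$. Substituting this into the identity above and multiplying both sides by $(-1)^{n(n-1)/2}$ (noting this sign is its own inverse) yields the claimed formula. There is no real obstacle here; the only care required is the bookkeeping of the sign $(-1)^{n(n-1)/2}$, which arises from the $\binom{n}{2}$ unordered pairs of distinct roots.
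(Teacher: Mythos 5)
Your proof is correct and complete. The paper itself does not prove this lemma --- it is quoted without proof from \cite[Lemma 2.6]{SKM} --- but your argument is the standard derivation that such a reference would give: factor $f$ over $\overline{\Q}$, obtain $f'(\theta_i)=\prod_{j\neq i}(\theta_i-\theta_j)$ from the product rule, collect the $\binom{n}{2}$ sign flips to get $\prod_i f'(\theta_i)=(-1)^{n(n-1)/2}D_f$, and identify $\prod_i f'(\theta_i)$ with $\mathcal{N}_{K/\Q}(f'(\theta))$ via the $n$ embeddings $\theta\mapsto\theta_i$. All steps, including the sign bookkeeping and the use of separability/irreducibility to get $n$ distinct embeddings, are handled correctly.
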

 
\begin{lemma}\label{extn}
Let $K/F$ be an extension of degree $n$ and $\alpha$  be an element of $K$ with $[F(\alpha):F]=d$. Then $$ \mathcal{N}_{K/F}(\alpha)=\left( \mathcal{N}_{F(\alpha)/F}(\alpha)\right)^{n/d}. $$
 \end{lemma}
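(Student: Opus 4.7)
The plan is to view $\mathcal{N}_{K/F}(\alpha)$ through the intermediate field $F(\alpha)$ using the tower/transitivity property of the field norm, and then exploit the fact that $\alpha$ lies in the base of the inner norm so that this inner norm reduces to an elementary power.

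First I would record the degree relation: since $F \subseteq F(\alpha) \subseteq K$, multiplicativity of field degrees gives $[K:F(\alpha)] = n/d$, and in particular $d \mid n$, so $n/d$ is a positive integer. Next I would invoke the transitivity of norms in a tower, which asserts that for any $\beta \in K$,
$$\mathcal{N}_{K/F}(\beta) = \mathcal{N}_{F(\alpha)/F}\bigl(\mathcal{N}_{K/F(\alpha)}(\beta)\bigr).$$
In the setting of number fields (characteristic zero) this is automatic; more generally it follows by counting embeddings into a fixed algebraic closure $\overline{F}$, each $F$-embedding of $F(\alpha)$ extending to exactly $[K:F(\alpha)]$ distinct $F$-embeddings of $K$, so that the product defining $\mathcal{N}_{K/F}(\beta)$ factors through $F(\alpha)$.

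Specialising $\beta = \alpha$ and using that $\alpha \in F(\alpha)$, the inner norm collapses to
$$\mathcal{N}_{K/F(\alpha)}(\alpha) = \alpha^{[K:F(\alpha)]} = \alpha^{n/d},$$
since the norm of any element of the base field of a finite extension is simply that element raised to the degree of the extension. Finally, by multiplicativity of $\mathcal{N}_{F(\alpha)/F}$ on powers,
$$\mathcal{N}_{K/F}(\alpha) = \mathcal{N}_{F(\alpha)/F}\bigl(\alpha^{n/d}\bigr) = \bigl(\mathcal{N}_{F(\alpha)/F}(\alpha)\bigr)^{n/d},$$
which is the stated identity.

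The only substantive ingredient is the tower formula itself; this is a classical result in field theory and, as the reference \cite[Theorem 1.19]{SKM} suggests, is available off the shelf in precisely the context we need, so no serious obstacle is anticipated. If one preferred a self-contained route, the embedding-counting argument sketched above would supply a direct proof without appealing to the general transitivity statement at all.
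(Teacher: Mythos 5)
Your proof is correct. The paper does not prove this lemma at all --- it is quoted directly from \cite[Theorem 1.19]{SKM} as a known fact --- so there is no in-paper argument to compare against; your route (tower transitivity of the norm, followed by $\mathcal{N}_{K/F(\alpha)}(\alpha)=\alpha^{[K:F(\alpha)]}$ for the base-field element $\alpha$, and multiplicativity of $\mathcal{N}_{F(\alpha)/F}$) is the standard textbook derivation and is exactly what one would expect behind the citation. Your remark that the embedding-counting justification of transitivity needs separability, which is automatic in the characteristic-zero setting of the paper, is the right caveat to flag.
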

 \begin{lemma} \label{disc lemma}
 	Let $f(x)=x^n -a$ and $g(x)=x^m-b$ be polynomials in $\Z[x]$. Let $F(x)=f(g(x))$ be an  irreducible polynomial, then the discriminant $D_F$ of $F$ is given by
 	\begin{equation} \label{discriminant}
 		D_F= (-1)^{\frac{n(n-1)}{2}+m}(nm)^{nm} a^{m(n-1)} ((-b)^n-a)^{m-1}.
 	\end{equation}
 \end{lemma}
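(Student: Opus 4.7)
The plan is to apply Lemma \ref{2.1lemma} directly, reducing the computation to a norm $N_{K/\Q}(F'(\theta))$, and then to exploit the tower of fields $\Q \subset \Q(\theta^m-b) \subset K$ to evaluate that norm through Lemma \ref{extn}.

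First I would differentiate to get $F'(x) = mn\, x^{m-1}(x^m-b)^{n-1}$, so that
\[
F'(\theta) = mn\,\theta^{m-1}(\theta^m - b)^{n-1}.
\]
Since $[K:\Q] = mn$, the multiplicativity of the norm gives
\[
N_{K/\Q}(F'(\theta)) = (mn)^{mn}\, N_{K/\Q}(\theta)^{m-1}\, N_{K/\Q}(\theta^m - b)^{n-1}.
\]
Next I would read off $N_{K/\Q}(\theta)$ from the constant term of the minimal polynomial $F(x)$: since $F(0) = (-b)^n - a$, one has $N_{K/\Q}(\theta) = (-1)^{mn}\bigl((-b)^n - a\bigr)$.

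The slightly more delicate step is the norm of $\alpha := \theta^m - b$. By hypothesis, $\alpha^n = a$, so $\alpha$ is a root of $f(x) = x^n - a$. I would first justify that $f(x)$ is irreducible: the irreducibility of $F(x)$ forces $[K:\Q] = mn$, and from the tower $\Q \subset \Q(\alpha) \subset K$ together with $\theta^m = \alpha + b$ one gets $[K:\Q(\alpha)] \leq m$, hence $[\Q(\alpha):\Q] \geq n$, but $\alpha$ already satisfies a degree-$n$ relation. So $[\Q(\alpha):\Q] = n$ and $[K:\Q(\alpha)] = m$. Then Lemma \ref{extn} yields
\[
N_{K/\Q}(\alpha) = \bigl(N_{\Q(\alpha)/\Q}(\alpha)\bigr)^{m} = \bigl((-1)^{n}(-a)\bigr)^{m} = (-1)^{m(n+1)}a^{m}.
\]

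Plugging everything into Lemma \ref{2.1lemma} gives
\[
D_F = (-1)^{mn(mn-1)/2}\,(mn)^{mn}\,(-1)^{mn(m-1)}\bigl((-b)^n-a\bigr)^{m-1}\,(-1)^{m(n+1)(n-1)}a^{m(n-1)}.
\]
All the remaining work is a parity check: I would simplify the exponent
\[
\tfrac{mn(mn-1)}{2} + mn(m-1) + m(n+1)(n-1) \pmod{2}
\]
and show it agrees with $\tfrac{n(n-1)}{2}+m$ modulo $2$ (splitting by the parities of $m$ and $n$ is the most transparent way). This sign bookkeeping is the only real source of friction, but it is purely mechanical; once it is settled the claimed formula \eqref{discriminant} drops out immediately.
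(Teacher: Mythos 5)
Your argument follows the paper's own proof step for step: Lemma \ref{2.1lemma}, the factorization $F'(\theta)=mn\,\theta^{m-1}(\theta^m-b)^{n-1}$, the constant-term evaluation of $\mathcal{N}_{K/\Q}(\theta)$, and the tower $\Q\subset\Q(\theta^m-b)\subset K$ together with Lemma \ref{extn} to get $\mathcal{N}_{K/\Q}(\theta^m-b)=(-1)^{m(n+1)}a^m$. The only genuine difference is how you prove irreducibility of $x^n-a$: you use degree counting in the tower, whereas the paper notes that a factorization $x^n-a=h_1(x)h_2(x)$ would give $F(x)=h_1(x^m-b)h_2(x^m-b)$. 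Both work, and all of your intermediate norm computations are correct.

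The gap is precisely the step you declare ``purely mechanical'' and do not carry out. The exponent you arrive at,
\[
\tfrac{mn(mn-1)}{2}+mn(m-1)+m(n+1)(n-1),
\]
reduces modulo $2$ to $\tfrac{mn(mn+1)}{2}+m$ (use $m^2n\equiv mn$ and $m(n^2-1)\equiv mn+m$), and this is \emph{not} congruent to $\tfrac{n(n-1)}{2}+m$ in general; for $m=1$ the two differ by $n^2\equiv n$, so they disagree whenever $n$ is odd. Concretely, for $m=1$, $n=3$ the polynomial $(x-b)^3-a$ has discriminant $-27a^2$ (translation invariance plus the classical value for $x^3-a$), while \eqref{discriminant} gives $+27a^2$; for $m=n=2$ the biquadratic $x^4-2bx^2+(b^2-a)$ has discriminant $+256a^2(b^2-a)$, while \eqref{discriminant} gives $-256a^2(b^2-a)$. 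So the parity check you postponed cannot succeed: your (correct) computation actually yields the sign $(-1)^{\frac{mn(mn+1)}{2}+m}$ rather than the stated $(-1)^{\frac{n(n-1)}{2}+m}$. The absolute value of the right-hand side is unaffected, and only $|D_F|$ is used elsewhere in the paper, but as a proof of the lemma as stated your argument does not close --- and note that the paper's own proof stumbles at the same spot, writing the contribution of $\mathcal{N}_{K/\Q}(\theta)^{m-1}$ as $(-1)^{mn}$ instead of $(-1)^{mn(m-1)}$ and never performing the final exponent reduction. Had you actually done the sign bookkeeping you deferred, you would have caught this.
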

 \begin{proof}
 	Let $\theta$ be a root of $F(x)$. Since $F'(x)=nmx^{m-1}(x^m-b)^{n-1}$, we have 
 	\begin{equation}\label{2.1}
 	F'(\theta)=nm\theta^{m-1} (\theta^m-b)^{n-1}. 
 	\end{equation}
 We simply write $\mathcal{N}$ for the norm $\mathcal{N}_{K/\Q}$, where $K=\Q(\theta)$. Since $\mathcal{N}(\theta)=(-1)^{mn}((-b)^n-a)$ and $\mathcal{N}(mn)=(mn)^{mn}$, taking norm both sides of $\eqref{2.1}$, we obtain
\begin{equation}\label{2.3}
	\mathcal{N}(F'(\theta))=(-1)^{mn}(mn)^{mn}\mathcal{N}(\theta^m-b)^{n-1}((-b)^n-a)^{m-1}.
\end{equation}
To calculate $\mathcal{N}(\theta^m-b)$, let $\theta^m-b=z$.
Since $F(\theta)=0$, we obtain
\begin{align*}
	F(\theta)=(\theta^m-b)^n-a=z^n-a=0.
\end{align*}
Therefore, we see that $z$ is root of the polynomial $h(x)=x^n-a$. We claim that $h(x)$ is a minimal polynomial for $z$. Keeping in mind that $\Q(z)=\Q(\theta^m)$, for proving our claim, it is sufficient to show that $h(x)$ is irreducible polynomial over $\Q.$  Suppose that $h(x)=h_1(x)h_2(x)$ is the factorisation of $h(x)$ in $\Z[x]$ with $\deg h_1(x), \deg h_2(x)\geq 1$. Clearly $h(x^m-b)=F(x)$. Using this we obtain $F(x)=h_1(x^m-b)~h_2(x^m-b)$, which contradicts the irreducibility  of $F(x)$. Hence $h(x)$ is irreducible. This proves our claim. 
So we have $\mathcal{N}_{\Q(z)/\Q}(z)=(-1)^{n}(-a)$.
Using Lemma $\eqref{extn}$, we obtain
\begin{equation}\label{3.10}
	\mathcal{N}(z)=(-1)^{m(n+1)}a^m.
\end{equation}
Therefore, the theorem follows from Lemma $\ref{2.1lemma}$ and Equations $\eqref{2.3}$ and $\eqref{3.10}$.
 \end{proof}

The following well known lemma will be used in the sequel. The equivalence of assertions $(i)$ and $(ii)$ of the theorem was proved by Dedekind (cf. \cite[Theorem 6.1.4]{HC}, \cite{RD}). A simple proof of the equivalence of $(ii)$ and $(iii)$ is given in \cite[Lemma 2.1]{JNT}. 

\begin{lemma}\label{dedekind}
	Let  $f(x) \in \Z[x]$ be a monic irreducible polynomial  having the factorization $\bar{g}_1(x)^{e_1} \cdots \bar{g}_{t}(x)^{e_{t}}$ modulo a prime $p$ as a product of powers of distinct irreducible polynomials over $\Z/p\Z$ with $g_i(x) \in \Z[x]$ monic. Let $K=\Q(\theta)$ with $\theta$ a root of $f(x)$.   Then the following statements are equivalent:
	\begin{itemize}
		\item[(i)] $p$ does not divide $[\Z_k:\Z[\theta]]$.
		\item[(ii)] For each $i$, we have either $e_i =1 $ or $\overline g_i (x)$ does not divide $\overline M(x)$ where $M(x) = \frac{1}{p}(f(x) -  g_1 (x)^{e_1} \cdots  g_{t} (x)^{e_{t}} )$.
		\item[(iii)] $f(x)$ does not belong to the ideal $\langle p, g_i(x)\rangle^2$ in $\Z[x]$ for any $i$, $1\leq i\leq t$.
	\end{itemize}
		
\end{lemma}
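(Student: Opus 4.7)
The plan is to establish the three equivalences by first proving the elementary algebraic equivalence (ii) $\Leftrightarrow$ (iii) directly inside $\Z[x]$, and then addressing the substantive equivalence (i) $\Leftrightarrow$ (ii) by passing to $p$-adic completions; the latter is the classical content of Dedekind's criterion and is where the genuine arithmetic input appears.

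For (ii) $\Leftrightarrow$ (iii), I would start from the identity $f(x) = g_1(x)^{e_1} \cdots g_t(x)^{e_t} + p\,M(x)$ in $\Z[x]$ and fix an index $i$. When $e_i \geq 2$, the product $\prod_j g_j(x)^{e_j}$ already lies in $\langle g_i\rangle^{e_i} \subseteq \langle p, g_i\rangle^2$, so $f \in \langle p, g_i\rangle^2$ iff $pM \in \langle p, g_i\rangle^2$; passing to the quotient $\Z[x]/\langle p, g_i\rangle \cong \F_p[x]/(\bar g_i)$, which is a field, this is in turn equivalent to $M \in \langle p, g_i\rangle$, i.e.\ $\bar g_i \mid \bar M$ in $\F_p[x]$. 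When $e_i = 1$, reduction mod $p$ gives $\bar f = \bar g_i \cdot \prod_{j\ne i} \bar g_j^{e_j}$ with $\bar g_i$ appearing only once, so $\bar g_i^2 \nmid \bar f$ and hence $f \notin \langle p, g_i\rangle^2$ automatically, matching the ``$e_i = 1$'' clause of (ii).

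For (i) $\Leftrightarrow$ (ii), I would pass to the $p$-adic integers $A = \Z_p$, so that $p \nmid [\Z_K:\Z[\theta]]$ iff $A \otimes \Z[\theta] = A \otimes \Z_K$. Because the $\bar g_i$ are pairwise coprime, Hensel's lemma lifts $\bar f = \prod \bar g_i^{e_i}$ to a factorization $f = \prod F_i$ in $A[x]$ with $F_i \equiv g_i^{e_i} \pmod{p}$. The Chinese Remainder Theorem then yields $A \otimes \Z[\theta] \cong \prod_i A[x]/(F_i)$ and $A \otimes \Z_K \cong \prod_i \mathcal{O}_i$, where $\mathcal{O}_i$ is the valuation ring of the completion $K_{\mathfrak{p}_i}$. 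So (i) is equivalent to the assertion that the local ring $R_i := A[x]/(F_i)$ equals the DVR $\mathcal{O}_i$ for every $i$, which in turn holds iff its maximal ideal $\langle p, g_i\rangle$ is principal. Writing $F_i = g_i^{e_i} + p H_i$ with $H_i \in A[x]$, the relation $g_i^{e_i} = -p H_i$ inside $R_i$ shows that $R_i$ is a DVR with uniformizer $g_i$ precisely when $H_i$ is a unit, i.e.\ when $\bar g_i \nmid \bar H_i$.

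The main obstacle, and final step, is the identification $\bar H_i \equiv \bar M \pmod{\bar g_i}$: comparing $f = \prod_j F_j$ with $f = \prod_j g_j^{e_j} + pM$ modulo $p^2$, and using that $\prod_{j \ne i} g_j^{e_j}$ is a unit modulo $\bar g_i$ by coprimality, a short computation isolates the $H_i$-coefficient and yields $\bar H_i \equiv \bar M \pmod{\bar g_i}$. This gives $\bar g_i \nmid \bar H_i \Leftrightarrow \bar g_i \nmid \bar M$, closing the chain of equivalences. An alternative route that avoids the Hensel lift is to work directly in the Dedekind domain $\Z_K$ at the primes $\mathfrak p_i = \langle p, g_i(\theta)\rangle$, reading off the ramification indices $e_i'$ from the identity $\prod_j g_j(\theta)^{e_j} = -p M(\theta)$ via $\mathfrak p_i$-adic valuations and comparing with $\sum_i e_i \deg g_i = n$; this sidesteps completions but requires slightly more care with the semilocal structure of $\Z_K/p\Z_K$.
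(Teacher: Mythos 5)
Your proposal cannot be compared line-by-line with the paper, because the paper does not prove this lemma: it is stated as well known, with the equivalence (i)$\Leftrightarrow$(ii) attributed to Dedekind (via \cite{HC}, \cite{RD}) and (ii)$\Leftrightarrow$(iii) to \cite[Lemma 2.1]{JNT}. What you have written is essentially the standard modern proof of Dedekind's criterion, and it is correct in outline. Your (ii)$\Leftrightarrow$(iii) argument is complete: membership of $f$ in $\langle p, g_i\rangle^2=\langle p^2, pg_i, g_i^2\rangle$ forces $\bar g_i^2\mid \bar f$, which disposes of the case $e_i=1$, and for $e_i\geq 2$ the reduction of ``$pM\in\langle p,g_i\rangle^2$'' to ``$\bar g_i\mid \overline M$'' goes through because $g_i$ is monic (so $\bar g_i^2 C\equiv 0$ modulo $p$ forces $p\mid C$ coefficientwise). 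In the (i)$\Leftrightarrow$(ii) half, two points deserve more care than your sketch gives them. First, you should not assert at the outset that $\Z_p\otimes\Z_K\cong\prod_i\mathcal{O}_i$ with a single valuation ring per residual factor $\bar g_i$: a Hensel factor $F_i$ may split further over $\Q_p$, so a priori $\Z_p\otimes\Z_K$ is only the integral closure of $\prod_i R_i$, and (i) should be phrased as ``each $R_i$ is integrally closed,'' which for the reduced one-dimensional local ring $R_i$ is then shown to be equivalent to $R_i$ being a discrete valuation ring. Second, in the direction (i)$\Rightarrow$(ii) the implication ``$R_i$ is a DVR $\Rightarrow$ $H_i$ is a unit'' needs the small computation that the ramification index of $R_i$ over $\Z_p$ equals $e_i$ (from ranks and residue degrees), so that when $e_i\geq 2$ the generator $g_i$ of $\mathfrak{m}_i=\langle p,g_i\rangle$ must have valuation $1$; the relation $g_i^{e_i}=-pH_i$ then forces $H_i$ to be a unit. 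Both points are routine to repair, your identification $\overline M\equiv \overline H_i\cdot\prod_{j\neq i}\bar g_j^{e_j}$ modulo $\bar g_i$ is the right closing step, and the alternative route you mention (working with the primes $\langle p, g_i(\theta)\rangle$ of $\Z_K$ directly) is also a standard way to obtain (i)$\Leftrightarrow$(ii) without completions.
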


\section{Proof of Theorem \ref{1.1}}
 \begin{proof}[Proof of Theorem \ref{1.1}]
Let $p$ be a prime number dividing $D_F$. In view of Lemma \ref{dedekind}, $p$ does not divide $[\Z_K : \Z[\theta]]$ if and only if $f(x) \not\in \langle p, g(x) \rangle^2$ for any monic polynomial $g(x) \in \Z[x]$ such that $\bar g(x)$ divides $\bar f(x)$ and $g(x)$ irreducible modulo $p$. Note that $f(x) \not\in \langle p,g(x)\rangle ^2 $ if $\bar g(x) $ is not a repeated factor of $\overline{f}(x) $. We prove the theorem case by case.

\noindent Case (i). Suppose $p\mid a$. In this case, we have either $p\mid b$ or $p\nmid b$. Consider the first possibility when $p\mid b$, then  $F(x) \equiv x^{mn}~(mod~p)$. Clearly $F(x) \in \langle p, x\rangle^2$ if and only if $p^2$ divides $a$, consequently $p\nmid [\Z_K : \Z[\theta]]$ if and only if $p^2\nmid a$. We now consider the second possibility when  $p\nmid b$. In this situation, $ {F}(x)\equiv(x^m-{b})^n$ (mod $p$). Let $m=p^js$ with $j\geq 0$. In view of Binomial theorem, we obtain
\begin{equation}
  F(x)\equiv (x^m-b)^n\equiv (x^s-b)^{np^j} ~(mod ~p).
\end{equation}
Let $\bar{g}_1(x) \cdots \bar{g}_{t}(x)$ be the factorization of $x^{s} - \overline{b}$ over $\Z/p\Z$, where $g_i(x) \in \Z[x]$ are monic polynomials which are distinct and irreducible modulo $p$. Write $x^{s} - b$ as $g_1(x)\cdots g_t(x) + pH(x)$ for some polynomial $H(x) \in \Z[x]$. Using Binomial theorem, one can easily check that 
$$h(x^{p^j}) = h(x)^{p^j}+ph(x)T(x)+b^{p^j}-b$$
for some polynomial $T(x)\in\Z[x]$. Therefore, keeping in view that $F(x)=(h(x^{p^j}))^n-a$, it follows that
\begin{equation}\label{eq;2.2}
	 F(x)=(h(x^{p^j}))^n-a=(h(x)^{p^j}+ph(x)T(x)+b^{p^j}-b)^n-a.
\end{equation}
  Using again Binomial theorem and the fact $p$ divides $b^{p^j}-b$ in Equation $\eqref{eq;2.2}$, we see that
\begin{equation}\label {eq;2.31}
	 F(x)=(h(x)^{p^j})^n+ph(x)M_1(x)+p^2M_2(x)
+(b^{p^j}-b)^n-a, 
\end{equation} for some polynomials $M_1(x),M_2(x)\in\Z[x]$.
Since $n \geq 2$, the first four terms on the right hand side of $\eqref{eq;2.31}$ belong to   $\langle p, g_i(x) \rangle^2$ for each $i$, $1\leq i\leq t$. So $F(x) \in \langle p, g_i(x) \rangle^2$ for some $i$, $1\leq i\leq t$  if and only if $p^2\mid a$. Therefore, by  Lemma \ref{dedekind} ~$p\nmid[\Z_K : \Z[\theta]]$ if and only if $p^2\nmid a$. This proves the theorem in Case (i). \\\\
\noindent Case (ii). Suppose $p \nmid a$ and $p\mid b$. Since $p \mid D_F$, we conclude that $p$ divides $mn $ in view of  $(\ref{eq:1.1})$. Write  $n = s'p^{k}$, $m = sp^{j}$ where $p\nmid{ ss'}, ~j\geq 0,k\geq 0$ along with $j+k\geq 1$. In view of Binomial theorem, we obtain
\begin{equation}
	F(x)\equiv(x^m -b)^n-a\equiv (x^{ss'}-a)^{p^{j+k}} (mod ~p).
\end{equation}
 Let $\bar{g}_1(x) \cdots \bar{g}_{t}(x)$ be the factorization of $x^{ss'} - \overline{a}$ over $\Z/p\Z$, where $g_i(x) \in \Z[x]$ are monic polynomials which are distinct and irreducible modulo $p$. Denote $h(x)=x^{ss'} - a$ and write $h(x)$ as $g_1(x)\cdots g_t(x) + pH_1(x)$ for some polynomial $H_1(x) \in \Z[x]$. Using Binomial theorem  for $(x^m-b+b)^n = (h(x)+a)^{p^{j+k}}$, we see that
 \begin{equation*}
 (x^m-b)^n = (h(x)+a)^{p^{j+k}} -  \left(\sum\limits_{i=1}^{n}\binom{n}{i}(x^{m}-b)^{n-i}b^{i}\right)	
 \end{equation*}
and hence using $p \mid b$, we have
\begin{equation}\label{3.5}
	F(x) = (x^{m}-b)^n - a =  h(x)^{p^{j+k}}+ph(x)M_1(x)+p^2M_2(x)+a^{p^{j+k}}-n(x^m-b)^{n-1}b-a
\end{equation} 
for some polynomials $M_1(x),M_2(x)\in \Z[x]$. The first three summands on the right hand side of $\eqref{3.5}$ belong to   $\langle p, g_i(x) \rangle^2$ for each $i$, $1\leq i\leq t$. So $F(x) \in \langle p, g_i(x) \rangle^2$ for some $i$, $1\leq i\leq t$,  if and only if the polynomials $\frac{1}{p}[a^{p^{j+k}}-a-ng(x)^{n-1}b]$ and $x^{ss'}-a$ have common root. By Lemma \ref{dedekind},  $p\nmid [\Z_K :\Z[\theta]]$ if and only if the polynomials $\frac{1}{p}[a^{p^{j+k}}-a-ng(x)^{n-1}b]$ and $x^{ss'}-a$ are coprime modulo $p$. This  proves the theorem in  Case $(ii)$. Here, one can observe that if $p \mid n$, then $p \nmid [\Z_K:\Z[\theta]]$ if and only if $p^2 \nmid (a^{p^{j+k}}-a) $. \\\\
Case (iii). Suppose $p\nmid ab$ and $p|n$. In this case, we have $\overline{F}(x) = (x^m-\bar{b})^{n}-\overline{a}$~ (mod $p$). Write $m=p^j s$ and $n=p^k s^{\prime}$ with $p \nmid ss^{\prime},~j \geq 0, k \geq 1$. Then $\overline{F}(x)=((x^s-b)^{s^{\prime}} -a)^{p^{j+k}}$. Denote $(x^s-b)^{s^{\prime}} -a$ by $h(x)$ so that $F(x)\equiv (x^{p^js}-b)^{p^ks^{\prime}}-a \equiv h(x)^{p^{j+k}}$ (mod $p$). Let $h(x) \equiv  g_1(x)\cdots g_{t}(x)$ (mod $p$) be the factorization of $h(x)$  into the product of irreducible polynomials modulo $p$ with each $g_{i}(x) \in \Z[x]$ monic. Keeping in mind that $h(x)=(x^s-b)^{s^{\prime}} -a$ and $h(x^{p^j}) \equiv {h}(x)^{p^j} (mod~p)$, we see that
\begin{align*}
	F(x) &= (h(x^{p^j})+a)^{p^k}-a \\
	&= (h(x)^{p^j}+pM_1(x)+a)^{p^k}-a,
\end{align*}
for some $M_1(x) \in \Z[x]$. As $k$ is a positive integer, in view of Binomial theorem we have $F(x)= h(x)^{p^{j+k}} +ph(x)M_2(x) + p^2M_3(x)+a^{p^k}-a $ for some $M_2(x),M_3(x) \in \Z[x]$. Using $h(x) = \prod\limits_{i=1}^{t} g_i(x) + pH(x)$ where $H(x)\in \Z[x]$, we obtain
\begin{align*}
	F(x) &= \left( \prod_{i=1}^{t} g_i(x) + pH(x) \right)^{p^{j+k}} + p \left( \prod_{i=1}^{t} g_i(x) + pH(x) \right) M_2(x) + p^2 M_3(x) + a^{p^k} - a \\
	&= \left(\prod_{i=1}^{t} g_i(x)\right)^{p^{j+k}} + p \prod_{i=1}^{t} g_i(x) M_4(x) + p^2 M_5(x) + a^{p^k} - a,
\end{align*}
for some polynomials $ M_4(x), M_5(x) \in \Z[x] $. Then $F(x) \in \langle p,g_i(x) \rangle^2$ for some $i,~1 \leq i \leq t$ if and only if $p^2 \mid (a^{p^k}-a)$. By Lemma \ref{dedekind}, $p$ does not divide $[\Z_K : \Z[\theta]]$ if and only if $p^2 \nmid (a^{p^k}-a)$. This proves the theorem in the present case.

\noindent Case (iv). Suppose $p \nmid abn$ and $p \mid m$. In this case, we have $\overline{F}(x) = (x^m-\bar{b})^{n}-\overline{a}$. Write $m=p^j s$ with $p \nmid s$ and $j \geq 1$. Then ${F}(x)=((x^s-b)^{n} -a)^{p^{j}}$ (mod $p$). Denote $(x^s-b)^{n} -a$ by $h(x)$ so that $F(x)=h(x^{p^j})=(x^{p^js}-b)^n-a $. Clearly $F(x) \equiv h(x)^{p^{j}}$ (mod $p$). Let $h(x) \equiv  g_1(x)\cdots g_{t}(x)$ (mod $p$) be the factorization of $h(x)$  into a product of irreducible polynomials modulo $p$ with each $g_{i}(x) \in \Z[x]$ monic. In view of Binomial theorem, we have
\begin{align*}
	F(x) &= ((x^s-b+b)^{p^j}-b)^n-a \\
	&= \left( (x^s-b)^{p^j} + \sum_{i=1}^{p^j-1} {p^j \choose i} (x^s-b)^{p^j-i} b^i + b^{p^j}-b \right)^n - a \\
	&= (x^s-b)^{n p^j} + n(x^s-b)^{(n-1)p^j} \left( \sum_{i=1}^{p^j-1} {p^j \choose i} (x^s-b)^{p^j-i} b^i + b^{p^j}-b \right) + p^2 N_1(x) - a \\
	&= (x^s-b)^{n p^j} + n \left( \sum_{i=1}^{p^j-1} {p^j \choose i} (x^s-b)^{np^j-i} b^i + b^{p^j}-b \right) + p^2 N_1(x) - a\\
	&= (h(x)+a)^{p^j} + n \left( \sum_{i=1}^{p^j-1} {p^j \choose i} (x^s-b)^{np^j-i} b^i + b^{p^j}-b \right) + p^2 N_1(x) - a
\end{align*}
for some polynomial $N_1(x) \in \Z[x]$. Now using $h(x)= (x^s-b)^n-a$ and noting that $ \nu_p( {p^j \choose i} )= j- \nu_p(i)$, we obtain
\begin{align*}
	F(x) =& (h(x))^{p^j} + ph(x)N_2(x) + a^{p^j} + p^2N_3(x) \\ 
	&+ n \left( \sum_{i=1}^{p-1} {p^j \choose ip^{j-1}} (x^s-b)^{np^j-ip^{j-1}} b^i + b^{p^j}-b \right) - a
\end{align*}
for some $N_2(x),N_3(x) \in \Z[x]$. Our desired result now follows in view of Lemma \ref{dedekind} $(ii)$.

\noindent Case (v). Now consider the last case when  $p\nmid abmn$. Since $p|D_F$, we have $m\geq 2$ by virtue of ($\ref{eq:1.1}$). Let $\beta$ be a repeated root of $\overline{F}(x) = (x^m-\bar{b})^n- \bar{a}$ in the algebraic closure of $\Z/p\Z$. Then
\begin{equation}\label {eq:131}
	\overline F(\beta)=(\beta^m-\bar b)^n- \bar a = \bar 0; ~~~ \overline F'(\beta)=\bar n(\beta^m - \bar b)^{n-1} \bar {m}\beta^{m-1}= \bar 0.
\end{equation}
Observe that $(\beta^m - b)^{n-1} \not \equiv 0$ (mod $p$), otherwise in view of the first equation of (\ref{eq:131}) $- \bar a = \bar{0}$ which is not possible as $p\nmid a$. Therefore, keeping in mind that $p\nmid mn$, it follows that  $\beta = \bar 0$ is the unique repeated root of $\bar{F}(x)$ in $\Z/p\Z$. Also, as $m\geq 2$, observe that $F(x) \in \langle p, x\rangle^2$ if and only if $ p^2 \mid ((-b)^n-a) $. Hence, by Lemma \ref{dedekind}, $p$ does not divide $[\Z_K:\Z[\theta]]$ if and only if $ p^2 \nmid ((-b)^n-a) $. This completes the proof of the theorem.
\end{proof}

 \medskip
  \vspace{-3mm}

 \end{document}